\def\no{\noindent}
\def\dd{\,\mathrm{d}}
\def\QED{~\mbox{$\Box$}}
\newtheorem{definition}{Definition}
\newtheorem{theorem}{Theorem}
\newtheorem{lemma}{Lemma}
\newtheorem{corol}{Corollary}
\newtheorem{remark}{Remark}
\def \RR {{\mathbb{R}}}
\def \no {\noindent}
\def \pmatrix{ \left( \begin{array} }
\def \endpmatrix{ \end{array} \right) }
\begin{document}

\date{\small August 27, 2010.}
\title{Numerical Solution of ODEs and the  {\em  Columbus'\,Egg}:\\ Three Simple Ideas for Three Difficult Problems.
\thanks{Work developed within the
project ``Numerical methods and software for differential
equations''.}}

\author{Luigi Brugnano\thanks{Dipartimento di Matematica
``U.\,Dini'', Universit\`a di Firenze, Italy ({\tt
luigi.brugnano@unifi.it}).} \and Felice
Iavernaro\thanks{Dipartimento di Matematica, Universit\`a di Bari,
Italy ({\tt felix@dm.uniba.it}).} \and Donato
Trigiante\thanks{Dipartimento di Energetica ``S.\,Stecco'', Universit\`a di
Firenze, Italy ({\tt trigiant@unifi.it}).}}

\maketitle

\vspace{-1cm}

\begin{abstract}
On computers, discrete problems are solved instead of continuous
ones. One must be sure that the solutions of the former problems,
obtained in real time (i.e., when the stepsize $h$ is not
infinitesimal) are good approximations of the solutions of the
latter ones. However, since the discrete world is much richer than
the continuous one (the latter being a limit case of the former),
the classical definitions and techniques, devised to analyze the
behaviors of continuous problems, are often insufficient to handle
the discrete case, and new specific tools are needed. Often, the
insistence in following a path already traced in the continuous
setting, has caused waste of time and efforts, whereas new
specific tools have solved the problems both more easily and
elegantly.

In this paper we survey three of the main difficulties encountered
in the numerical solutions of ODEs, along with the novel solutions proposed.

\medskip \no {\bf AMS:} 65P10, 65L05.

\medskip \no{\bf Keywords:} ordinary differential equations, linear
multistep methods, boundary value methods, $A$-stability,
stiffness, Hamiltonian problems, Hamiltonian Boundary Value
Methods, energy preserving methods, symplectic methods, energy
drift.

\end{abstract}

\section{Introduction}
\begin{flushright}
  \textsf{\em When things get too complicated, it sometimes makes sense\\
              to stop and wonder: Have I asked the right question?\quad~~~~~~~}\\
  \textsf{Enrico Bombieri, from `Prime Territory' in The Sciences.~~~~~~}\\
\end{flushright}

The need for highly efficient numerical methods able to solve the
challenging multiscale problems arising from countless and
wide-spread applications is well known (see, e.g. \cite{multrep}).
The core of a simulating tool for differential models consists of
one or more numerical methods for solving Ordinary Differential
Equations (ODEs).

In this paper we survey three of the main difficulties encountered
in this field along with the surprising solutions proposed, often
different from what both experience and  tradition were suggesting.
As a matter of facts, both tradition and experience in Mathematics
are mainly focused to continuous quantities, while Numerical
Analysis is obliged to face discrete quantities.

The modern numerical treatment of ODEs starts with the
introduction of  computers. The approaches of the pre-computer and
the post-computer era are quite different. While in the first case
the central concept was the notion of convergence with respect to
the annihilation of  a parameter representing the stepsize of
integration, in the second case the central concept has become the
notion of stability, although often in the  most elementary form.
The reason of this change is due to the fact that there are
convergent methods which give bad results even for very small
values of the stepsize. This was not clear when the computations
had to be made by hand and, therefore, only in limited quantity.
Once computers allowed the execution of a huge number of
operations within a small amount of time, the problem arose in all
its gravity.

This question was largely debated in the fifties and sixties. It
is a great merit of G.\,Dahlquist to recognize that the difference
equations describing the methods should inherit from the
continuous problem not only the critical points but also the
geometry around them. Although many authors date the birth of the
so called \emph{geometric integration} to a later period, it is
our opinion that it has to coincide with the mentioned Dahlquist
request.

 Actually, at
that time, the main concern was about dissipative problems whose
critical points are obviously asymptotically stable. Since for
such problems the linearization in a neighborhood of the critical
point very well describes the geometry around it, the use of the
famous test equation ($y'=\lambda y$, $\Re(\lambda)<0$) is then
justified and this gave rise to the so called \emph{linear
stability analysis}.

The main result of such analysis is the definition of the region
of absolute stability, which  is the region of the $q$-plane
($q=h\lambda$) in correspondence of which the critical point (the
null solution) is asymptotically stable for the difference
equation obtained by applying the numerical method to the above
mentioned test problem. Of course, one may wish to obtain absolute
stability regions no smaller than the stability region of the
continuous problem ($\Re( \lambda) <0 $) and this led to the
definition of $A$-stable methods.

This request very soon turned out to be too restrictive, at least
for the class of Linear Multistep Methods (LMMs),  which the
negative result of the Dahlquist barrier refers to: there are no
$A$-stable explicit methods, and among the implicit ones, the best
is the trapezoidal rule, which is only second order.

This shortage of $A$-stable methods explains the period of crisis
in the use of LMMs and the upper hand of one-step methods
(Runge-Kutta) over them. Eventually, in the nineties, the way to
obtain $A$-stable LMMs was found and by now a very large amount of
them are at our disposal. The idea to get them,  already  proposed
by Dahlquist \cite[p.\,378]{DAbook}, is very simple and it will be
our first topic.

Do all ODEs need large absolute stability regions? of course not.
But difficult problems,  called \emph{stiff problems}, do. What are
stiff problems? A  definition, precise enough to be used
  in  modern general-purpose codes, has been lacking for
 many years.\footnote{Even now, one can read, for example on Scholarpedia \cite{Higu},
that such a definition is not possible.} As matter of fact, a proper
definition of stiffness exists since 1996 and it has also been
usefully used in many modern codes. This relies upon a simple idea
which will be our second topic.

What about conservative  problems? Here the geometrical properties
the numerical method has to inherit are more difficult to
establish, since they are much more perturbation dependent than
dissipative systems. In fact, after Poincar\'e (see, for example,
\cite{Poincare-1886,AIR}), it is well known that linearization
does not help in this case, the literature being plenty of
examples of ODEs sharing the same linear part, with the geometry
around the stable (but not asymptotically stable) critical points
changing drastically according to the nonlinear part. This implies
that a linear stability analysis on the method does not make
sense, unless one is interested in solving linear problems.

There are, however, other peculiarities which could be desirably
inherited by the numerical method. For example, in a Hamiltonian
problem describing the motion of an isolated mechanical system, one
may ask to preserve a number of constants of motion, such as the
Hamiltonian function, the angular momentum, etc.. Although the
question has been under study for more then thirty years, no really
useful steps forward have been made until recently. Usually, people
have tried to mimic what physicists have done in the continuous case
without impressive results, in spite of the great amount of work.
For example, very sophisticated tools like backward analysis and
KAM-theory have been considered to examine the long time behavior of
the numerical solution produced by symplectic and symmetric methods.

Nonetheless, there are a few exceptions where completely new
strategies have been considered. One of these is represented by
{\em discrete gradient methods}, which are based upon the
definition of a discrete counterpart of the gradient operator
\cite{G,MQR}. Such methods guarantee the energy conservation of
the numerical solution whatever the choice of the stepsize of
integration. At present, methods in this class are known of order
at most two. More recently, a completely new approach has been
developed that has allowed the definition of a very wide class of
methods of any high order, suitable for the integration of
Hamiltonian problems \cite{BIT0,BIT1,BIT2,BIT3,BIT4}: this will be
our third topic.

This paper  contains three main sections, one for each theme
described above. Each section will contain a  number of subsections,
describing the problem along with the principal attempts devised to
solve it, and the idea that has inspired the new approach which,
often, turns out to be completely different from the previous ones.

We shall intentionally devote a larger part of the paper to discuss the last topic (i.e.,
the numerical integration of Hamiltonian problems), because of two reasons:
\begin{itemize}
 \item the presented results are relatively new;

\item Hamiltonian problems assume a paramount importance in modeling problems of Celestial Mechanics.
\end{itemize}

\section{$A$-stable Linear Multistep Methods} In order to set the problem in its
historical background, let us report what Hindmarsh, one of the
leading experts, wrote in a famous paper \cite{Hind}:

\begin{quote} \it
As recently as 1960, the commonly held perception on ordinary
differential equations in practical applications was that almost all
of them could be solved with simple numerical methods widely
available in textbooks. Many still hold that perception, but it has
become more and more widely realized, by people in a variety of
disciplines, that this is far to be true.
\end{quote}

Multiscale problems (in this setting called \emph{stiff problems},
see the next section) arising in a wide spectrum of applications,
very soon made the \emph{ known methods}, based on the concept of
convergence for {\em $h$ approaching  zero}, inadequate. The new
idea was to design methods working for {\em finite values of $h$},
at least for dissipative problems. This led to the already
mentioned definition of $A$-stability. But the Dahlquist barrier
established that the class of LMMs  having such property is very
scanty, or even empty if referred to the class of  methods of
order greater than $2$. This situation lasted for at least thirty
years. But it was again Dahlquist who had foreseen the possible
solution, as clearly stated in \cite[p.\,378]{DAbook}, regarding
$k$-step LMMs:

\begin{quote} \it
\dots when $k>1,$ the $k-1$ extra conditions to define a solution of
the difference equation, need not to be initial values. One can also
formulate a boundary-value problem for the differential equation.
The boundary-value problem can be stable for a difference equation
even though the root condition is not satisfied. This has been
pointed out in an important article by J.C.P. Miller.
\end{quote}

\subsection{The new idea: Miller's algorithm}

Why did the above precognition remain sterile for many years? The
successes in this direction obtained within the flourishing class of
Runge-Kutta methods somehow determined an unfavorable climate for
Miller's idea to inspire a systematic analysis. But, in our opinion,
this is not the whole story. It was also due to the concept of
stability   in Numerical Analysis, which was very vague at that
time. The most rigorous definition of such concept was related to
linear difference equations coupled with all initial conditions
(initial value problems).\footnote{\label{rootcon}With very few
exceptions, in Numerical Analysis books, stability is equivalent to
the so called \textit{root condition}, which requires that all the
roots of the characteristic polynomial of the difference equation
obtained by applying the LMM to the test equation $y'=0$ (or to
$y'=\lambda y$) lie inside the unit circle of the complex plane or
on its boundary, in which case they must be simple.} It was
necessary to refine the concept of stability in the quoted Dahlquist
phrase. We shall see in a moment that the \emph{stability} in
Miller's algorithm is in contradiction with the \emph{stability}
used in most numerical analysis books and reported in the footnote.
In order to be clearer, let us describe Miller's algorithm in its
easiest form.

Consider the linear difference equation with initial values
\begin{equation}\label{Diffeq}
  y_{n+2}=100.5y_{n+1}-50y_n, \qquad n\ge0,  \qquad y_0=\sqrt{3};\quad
  y_1=.5\sqrt{3}.
\end{equation}
Its solution is $y_n=2^{-n}\sqrt{3}$. However, when using finite
precision arithmetic, every computer, no matter how powerful, will
fail to compute iteratively the solution of
(\ref{Diffeq}),\footnote{If, accidentally, a computer does the job,
just replace the couple of coefficients $(100.5,50)$ by
$(1000.5,500)$. The solution remains the same.} despite its
simplicity. The algorithm designed by Miller, is able to find  a
very good approximation of the  solution even on the poorest
computer. It works as follows. Suppose we are interested in the
solution for $n<10$: just replace the second condition by
$y_{10}=0$. This transforms the original initial value problem (IVP)
in a boundary value problem (BVP). In Figure \ref{figmiller}, three
solutions are represented: two of them, the BVP solution and the
true one, are almost indistinguishable; the third one, i.e., that
obtained iteratively, accumulates errors and very soon becomes
negative.\footnote{If instead of $n=10$ one takes $n=20$, the first
two solutions remain positive and indistinguishable, while the third
one reaches the impressive value $-10^{20}$.}

\begin{figure}
\centerline{\includegraphics[width=12cm,height=9cm]{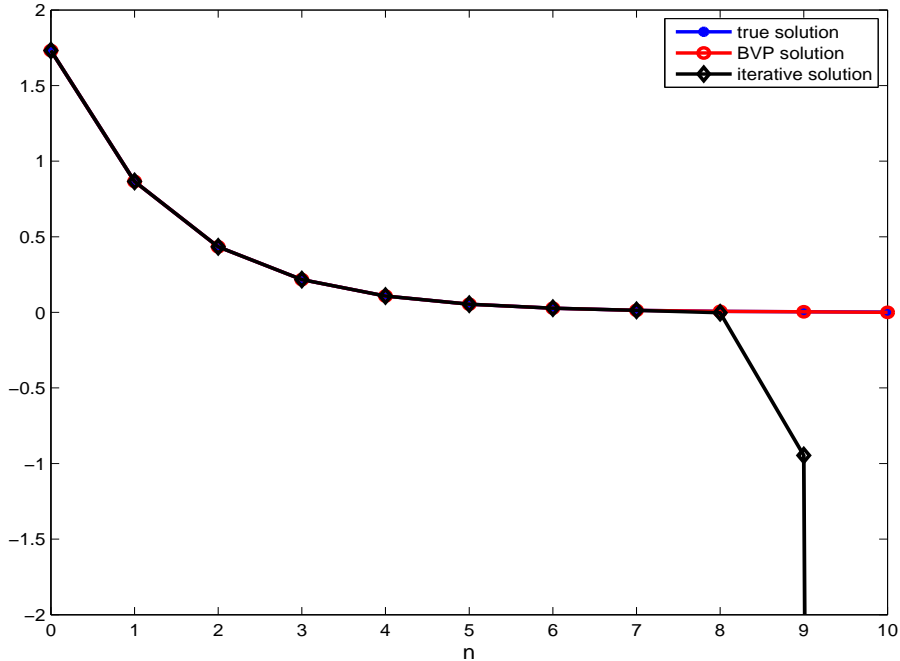}}
\caption{Solution of problem (\ref{Diffeq}) computed by Miller's
algorithm.} \label{figmiller}
\end{figure}

Even using the term ``stability''  in the vague definition often
used in Numerical Analysis, it is evident that the BVP solution is
much more stable than the IVP solution. To be more precise, the
characteristic polynomial of (\ref{Diffeq}) has one root inside
the unit circle and one outside. It cannot be considered stable
 according to the definition reported in Footnote~\ref{rootcon}. But
why does it turn out to be stable for the BVP? The answer is simple:
the definition of stability only concerns  IVPs. For BVPs one must
use the more general concept of  \emph{conditioning}. We do not
report here the details (see, e.g. \cite{imt}).  Once this new and
more precise concept is used, then all the above questions become
 clear. Coming back to the above example, it turns out that a
difference equation with one root inside and the other outside the
unit circle is well conditioned if the two conditions that guarantee
the uniqueness of its solution are placed one at the initial point
and the other at the final point. This is enough not only to explain
the result of the example, but also to explain, for example, the
poor performances of the celebrated \emph{shooting method} for
solving continuous BVPs (see \cite{imt}).

Miller's algorithm was deeply studied later on (see, e.g.
\cite{WG,Cash1,Olv1,Olv2,Ma1,Ma2}) and eventually applied to design
methods for ODEs, according to Dahlquist's suggestion. For details,
see \cite{BTbook} and  references therein.

\subsection{Abundance of $A$-stable methods}

 Once the vague concept of stability in Dahlquist's proposal
  became more precise with
 the introduction of the \emph{conditioning} notion,
 the definition of a great variety of  $A$-stable LMMs followed.
Of course, the  definition  of  absolute stability, which requires
that the asymptotic behaviour of the numerical solution be the same
as that of the theoretical one, in relation to the test equation
$y'=\lambda y$, $\Re(\lambda)<0$, needed to be generalized
accordingly.

\begin{definition}
 A $k$-step LMM coupled  with $k_1$ initial conditions and $k_2$ final
 conditions ($k=k_1+k_2$),  is  absolutely stable at $q=h \lambda$, if  the characteristic
 polynomial has $k_1$ roots inside the unit circle and $k_2$ roots
  outside.\footnote{Note that the definition reduces to the old one when $k_2=0,$ i.e. for LMMs coupled with only initial conditions.}
\end{definition}

  In other words,
 the requirement that all the roots of the characteristic polynomial
 of the difference equation defining the methods must be inside
 the unit circle was too much restrictive. The freedom to have
 some of them outside the unit circle opens wide possibilities. The
 only cost to pay is to shift some
 additional conditions to  the end of the interval. Such methods have been called
 Boundary Value Methods (BVMs) (see, e.g. \cite{BTbook}).
We are now plenty of $A$-stable linear multistep methods: even
the methods with the highest possible order with respect to the
given number of steps (Top Order Methods) are  $A$-stable.

The above result has made it possible to write efficient codes for
stiff (multiscale) problems based on LMMs (the codes GAM, GAMD,
GAMP \cite{im1, im2} BIM, BIMD \cite{BM04,BM07}, and TOM. See also
the {\em Test Set for IVP Solvers} \cite{testset} for benchmarks
of the most popular codes for ODEs).

\section{Stiffness}
Stiffness is a mathematical term to denote multiscale problems. At
least, this was the first meaning of the term, although during the
years the terms has been used in a great variety of meanings (for a
historical account see \cite{BMT}). A common way this term is
perceived currently, is the following:  a stiff problem is a problem
for which \emph{explicit methods do not work}. It is glaringly
evident the inadequacy of such definition both for the mathematical
needs and for logical reasons.

Despite the mentioned  variety of definitions, the design of
powerful codes needs a very precise definition of  stiff problems,
in order to be able to automatically recognize them  and choose
the appropriate strategies (appropriate methods, mesh selections,
etc.). The situation assumes a paradoxical aspect: from the one
hand some experts   claim the impossibility of giving the needed
precise definition of stiffness, and from the other, such
definition does exist and is also published in a book
\cite[pp.\,237 ff.]{BTbook}. The reason of such reluctance in
accepting the mentioned precise definition stays, in our
optimistic opinion, in the fact that the definition is based on a
very simple idea, as we shall see soon. Fortunately, things are
changing in recent years since the new definition has  been  used
to improve the performance of some celebrated codes
\cite{CaCaMa07,CaMa05,CaMa06,CaMa09}.

\subsection{The simple new idea}

The two plots in Figure~\ref{stiffig} report two functions: one
constant and the other rapidly variable. How is it possible to
distinguish their behaviors without looking  at them? Simply, just
compute the areas under their graphs and compare the results. Let
$T$ be the interval, for technical reasons we normalize the two
areas by dividing them by $T$. We get, respectively,
\begin{equation}\label{kgam}
\kappa_c=\frac{T|y_0|}{T}=|y_0|; \quad
\gamma_c=\frac{1}{T}\int_0^T|y(t)|dt.
\end{equation}
\begin{figure}
\centerline{\includegraphics[width=7.5cm,height=7.5cm]{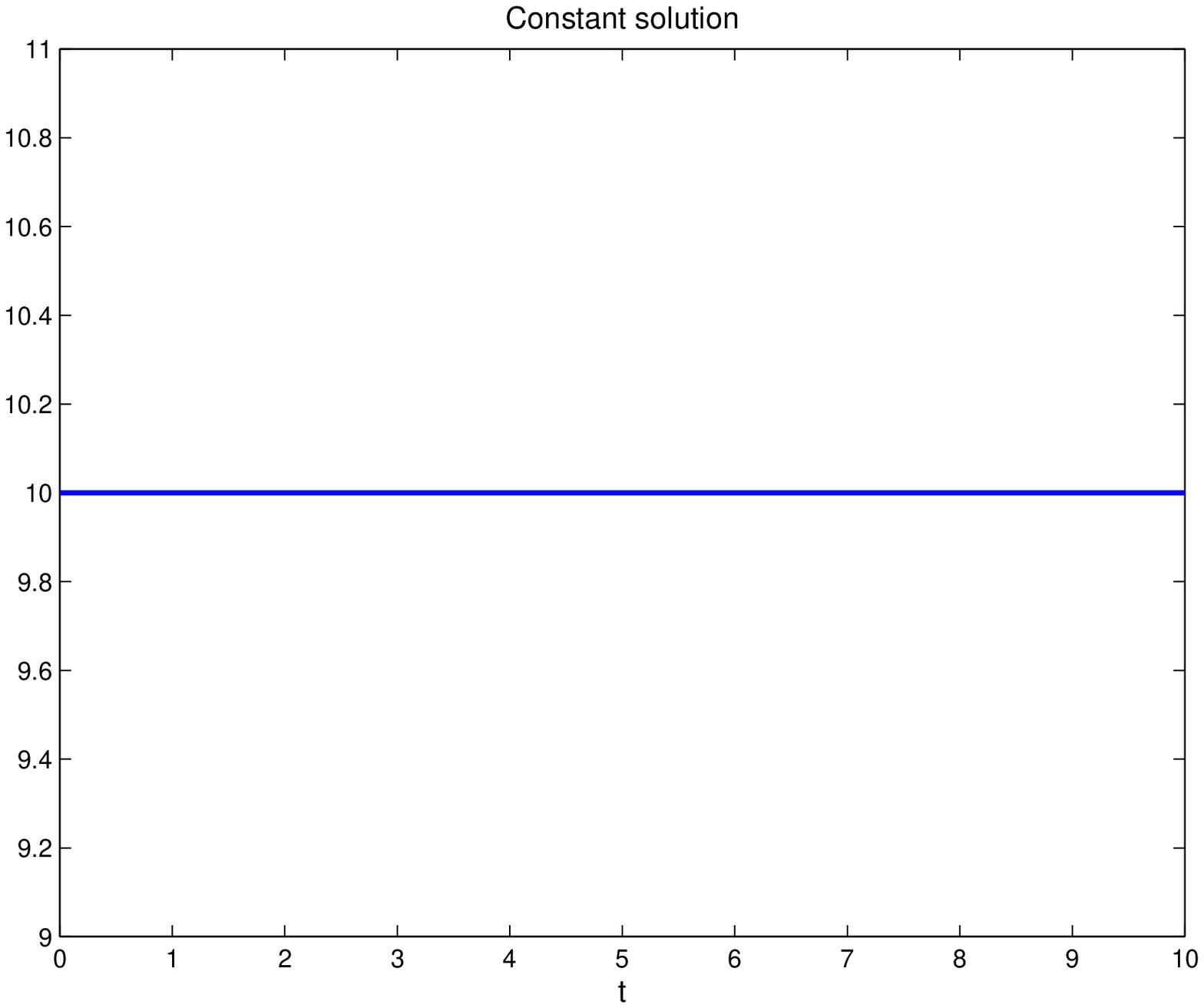}\qquad
\includegraphics[width=7.5cm,height=7.5cm]{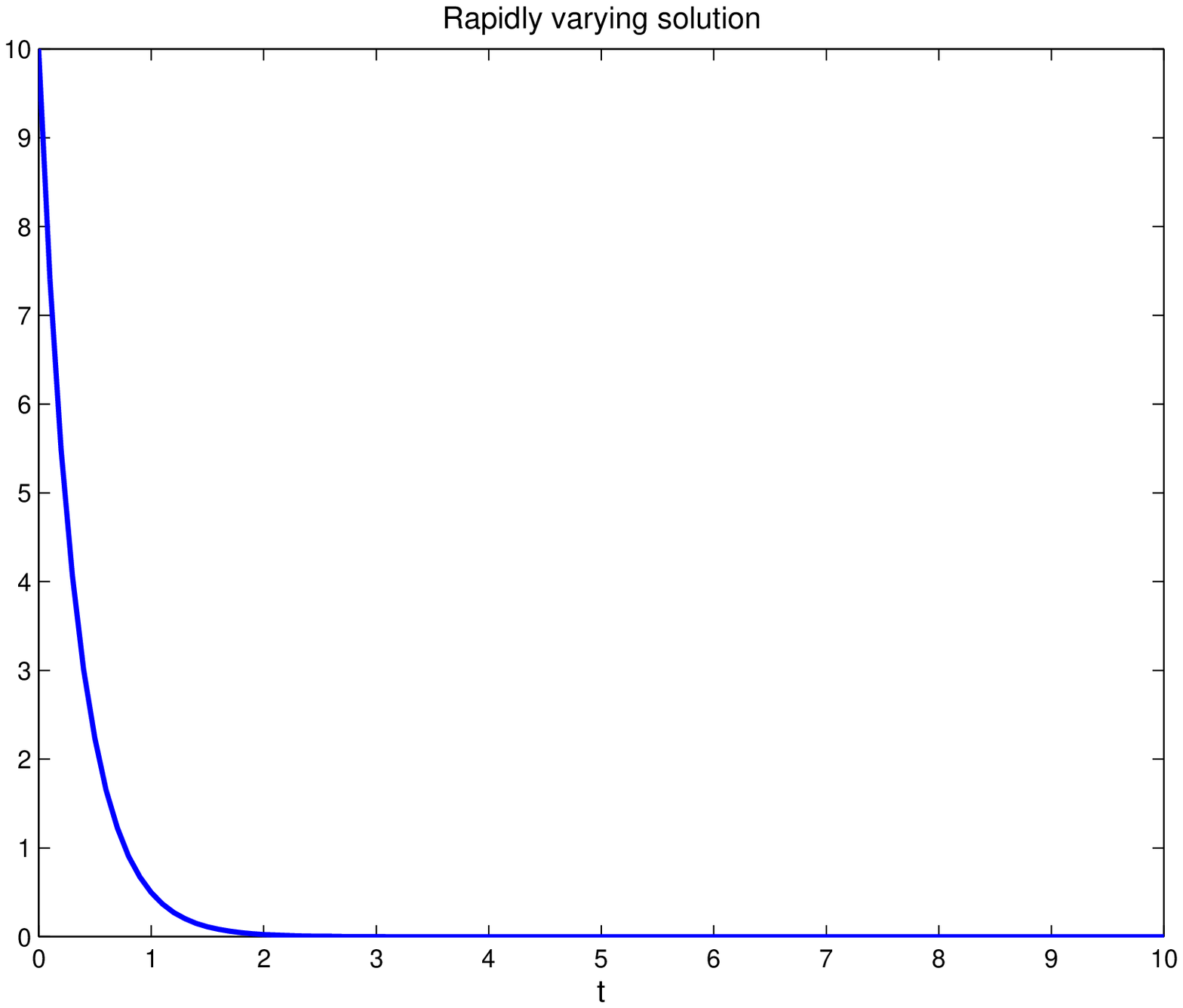}}
\caption{Functions with different behaviors: constant (left plot) and rapidly varying (right plot).}
\label{stiffig}
\end{figure}

Suppose that $y(t)=e^{\lambda t}y_0$ with $\lambda$ negative. We get:
$$
\gamma_c=\frac{1-e^{\lambda T}}{T |\lambda|}|y_0|<
\frac{1}{T|\lambda|}|y_0|.
$$

The ratio $$\sigma_c=\frac{\kappa_c}{\gamma_c} \simeq
\frac{T}{\frac{1}{|\lambda|}}$$ is the ratio of two times: the
integration interval ($T$) and the decaying time
($\frac{1}{|\lambda|}$). Of course, the important case is the one
in which the two types of solutions occur in the same problem. The
quantity $\sigma_c$ measures the stiffness (or the multiscalarity)
of the problem. The simple definition given above has been
generalized to more general  differential problems, autonomous and
non autonomous dissipative problems, conservative problems, etc.
(see \cite{BMT,BTstiff,BT07,BTbook,imt}). Of course the extension
of the definition to more general problems needs the introduction
of more involved technicalities, but the leading idea remains
unchanged: the definition of stiffness needs two measures, such as
the infinity norm ($\kappa_c$) and the $L_1$ norm ($\gamma_c$),
and the ratio between the two.\footnote{Actually, the maximum over
an appropriate set of ratios.}

The above definition of stiffness deals with the continuous case. Of
course, a very similar definition can be introduced for discrete
problems, thus leading to  the corresponding parameters $\kappa_d$,
$\gamma_d$, $\sigma_d$. The two set of parameters (called
\emph{conditioning parameters}), turn out to be useful also to tell
if the discrete problem is appropriate for approximating the
corresponding continuous one. In fact one has:
\begin{definition}
A continuous problem is {\em well represented} by a discrete one if
$\kappa_c\simeq \kappa_d$ and $\gamma_c\simeq \gamma_d$.
\end{definition}
The use of the above precise definition, based on computable
conditioning parameters, has allowed:
\begin{itemize}
  \item[-] to automatically recognize stiff problems;
  \item[-] to define efficient variable mesh selection strategies.
\end{itemize}
As was pointed out above, some existing codes and new ones take
advantage from the use of these conditioning parameters
\cite{BT07,CaCaMa07,CaMa05,CaMa06,CaMa09,CaMa,Ma03,MaSeTr09,MaTr04}.

\section{Hamiltonian Problems} Hamiltonian problems form a subclass
of conservative problems. They assume the form
\begin{equation}\label{ham}
  \frac{dy}{dt}= J \nabla H(y), \quad
  J=\pmatrix{cc}0&I_m\\-I_m&0\endpmatrix, \quad y(t_0)=y_0,
\end{equation}

\noindent where $y=(q^T,p^T)^T$, $q,p\in \RR^{m}$, and $H$ is a
sufficiently smooth scalar function.

As was observed in the introduction, the main difficulty in dealing
with them numerically  stems from the fact that the meaningful
isolated critical points of such systems are only marginally stable:
neighboring solution curves do not eventually approach the
equilibrium point either in future or in past times.

This implies that the geometry around them critically depends on
perturbations of the linear part. Consequently, the use of a
linear test equation, which essentially captures the geometry of
the linear part, whose utility has been enormous in settling the
dissipative case, cannot be of any utility in the present case.

It is then natural to look for other properties of Hamiltonian
systems that can be imposed on the discrete methods in order to
make them efficient. The first property which comes to mind is the
symplecticity  of the flow $\varphi_t := y_0 \mapsto y(t)$
associated with \eqref{ham}. This property can be described either
in geometric form (invariance of areas, volumes, etc.) or in
analytical form: $$\left(\frac{\partial \varphi_t}{\partial
y_0}\right)^T J \left(\frac{\partial \varphi_t}{\partial
y_0}\right)=J.$$ In one way or the other, it essentially consists
in moving infinitesimally on the trajectories representing the
solutions. Infinitesimally means retaining only the linear part of
the infinitesimal time displacement $\delta t$. It can be shown
that this produces  new values of the variables $q+\delta q,$
$p+\delta p$ which leave unchanged the value of the Hamiltonian
$H(q+\delta q,p+\delta p)=H(q,p)$ (Infinitesimal Contact
Transformation (ICT, see \cite[p.\,386]{Gold})).

Consequently, since the composition of two or more  such
\textbf{infinitesimal} transformations maintain the invariance, so
does an \textbf{infinite number} of them. The beauty of such result
in the continuous case has perhaps created the expectation that
similar beautiful achievements could be obtained in the discrete
case. It is not surprising that the first attempts, to numerically
approach  the problem, aimed at devising symplectic integrators
(\cite[Ruth (1983)]{Ruth} , \cite[Feng Kang (1985)]{Feng}; see also
the monographs \cite{SC,LeRe,HLW}  for more details on the subject).

Although in particular circumstances, for example in the case of
quadratic Hamiltonians or, more in general, in the case of
Hamiltonian systems admitting quadratic first integrals,  this
approach has provided very good results,\footnote{For example, a
symplectic Runge-Kutta method precisely conserves all quadratic
invariants of the motion.} it cannot be considered conclusive in
treating general Hamiltonian problems.

A backward error analysis has shown that symplecticity somehow
improves
 the long-time behavior properties of the numerical
solutions. For a symplectic method of order $p$, implemented with
constant stepsize $h$, the following estimation reveals how the
numerical solution $y_n$ may depart from the manifold $H(y)=H(y_0)$
of the phase space:
\begin{equation}
\label{backward} H(y_n)-H(y_0) = O(nh\,e^{-\frac{h_0}{2h}} h^p),
\end{equation}
where {\em $h_0>0$ is sufficiently small} and $h\le h_0$. Relation
\eqref{backward} implies that a linear drift of the energy with
respect to time $t=nh$ may arise. However, due to the presence of
the exponential, such a drift will not appear as far as $nh\le
e^{\frac{h_0}{2h}}$: this circumstance is often referred to by
stating that  {\em symplectic methods conserve the energy function
on exponentially long time intervals} (see, for example,
\cite[Theorem 8.1]{HLW}). This is clearly a surrogate of the
definition of stability in that the ``good behaviour'' of the
numerical solution is not extended on infinite time intervals. Even
more alarming is the fact that if one wants to compute the numerical
solution over very long times (as is done, for example, in the study
of the stability of the Solar System), on the basis of
\eqref{backward}, he may be obliged to reduce the stepsize below a
safety threshold, which is in contrast with the spirit of the long
time simulation of dynamical systems where the use of very large
stepsizes is one of the primary prerogatives.\footnote{Another
constraint is that $h_0$ is assumed {\em small enough}. In our
opinion, when possible,  expressions like ``for $h$ small enough''
should be avoided in Numerical Analysis: we like to believe that
geometric integration has been devised just to eliminate such an
expression.}

Where is the weakness of the approach? It is just in the above
outlined words \textbf{infinitesimal} and \textbf{infinite}, which
should be prohibited in Numerical Analysis. This discipline, in
fact, has to deal with nonzero   (greater than machine precision)
and finite (bounded either by the patience of the operator or by
the cost of energy) quantities. In other words, following this
approach, the situation of the pre-Dahlquist era for dissipative
problems has been recreated for conservative problems, in the
sense that before Dahlquist there already existed methods, even
with high order of convergence, that for {\em $h$ small enough}
would do the job (for example, the midpoint and Simpson's
methods).

Coming back to linear problems,  not all symplectic methods provide a
conservation of the Hamiltonian function even in this simpler case:
the following  example has been taken from \cite[Example\,8.2.1 on p.\,189]{BTbook}.
Consider the harmonic oscillator problem in Hamiltonian form
\begin{equation}\label{osc}
  \frac{d}{dt}\pmatrix{c}q\\p\endpmatrix=J\pmatrix{c}q\\p\endpmatrix.
\end{equation}
Let $h$ be the integration step and consider the numerical method
defined by
\begin{equation}\label{symeth}
\pmatrix{c}q_{n+1}\\p_{n+1}\endpmatrix=M_h\pmatrix{c}q_{n}\\ p_{n}\endpmatrix
\quad M_h=I+hJ-h^2\pmatrix{cc}1 &0\\0&0\endpmatrix.
\end{equation}
Since the continuous solution in $h$ is $e^{hJ}y_0,$ it is not
difficult to deduce that the method is first order:
$e^{Jh}-M_h=O(h^2)$. The method is symplectic, since
$M_h^TJM_h=J$, but fails to be conservative since, considering
that $H(q,p)=(q,p)(q,p)^T/2$, we have
$$
(q_{n+1},p_{n+1})\pmatrix{c}q_{n+1}\\p_{n+1}\endpmatrix=
(q_n,p_n)M_h^TM_h\pmatrix{c}q_{n}\\p_{n}\endpmatrix\neq
(q_n,p_n)\pmatrix{c}q_{n}\\p_{n}\endpmatrix.
$$

The matrix $M_h$ is orthogonal only if the term  $h^2$ is not
present, according to the ICT hypothesis. But, unfortunately, this
cannot be accepted since we want to use the method with finite
values of $h$.

The literature about symplectic integrators is quite wide since
Numerical Analysts, Physicists and Engineers have been working on
them since more than 25 years. Consequently, this testifies their
importance in the applications. However, other approaches have
been attempted, among which:

\begin{itemize}
\item[-] projection methods \cite{AR,H} and numerical integrators on
manifolds (see, e.g., \cite[Sect. IV.5.3]{HLW});
\item[-] definitions of discrete counterparts of operators describing
conservative vector fields,
  such as discrete gradients \cite{G,MQR}, discrete divergence \cite{IT1},
  averaged vector fields
  \cite{QMcL,Ha}, discrete line integrals \cite{BIT0} (see the next section
  for an introduction to the latter methods).
\item[-] generalizing the definition of symplecticity so as to
include some nonlinearity in it (state dependent symplecticity
\cite{IT2}).
\end{itemize}

\begin{remark}
It is worth mentioning that a Hamiltonian system may have other
constants of motion, consequently the following question arises:
suppose that we are able to devise methods conserving the energy,
are the methods also able to preserve, for example,  quadratic
invariants? Few results have been presented so far regarding
essentially methods in the Runge-Kutta class, most of them rather
pessimistic. The paper \cite{BIT4} gives a first positive answer to
this issue.
\end{remark}

 From the above discussion it turns out that the problem
considered  is a very difficult one, although very important in the
applications. The only solid result obtained in all these years
seems  to be the one establishing that, in order that a method can
preserve the Hamiltonian functions, it must be symmetric (see, e.g.,
\cite{BTbook}), although, of course, this condition in not
sufficient.

\subsection{The simple new idea} \label{easy} The novel approach
(see, for example \cite{BIT0} and references therein) starts from
a trivial observation. Let $\omega(t)$ be any smooth curve
$[t_0,t_1] \rightarrow \RR^{2m}$ passing through $y_0$ at time
$t_0$. Then, we have:

\begin{equation}\label{obs}
  H(\omega(t_1))-H(y_0)=\int_{t_0}^{t_1}\frac{d}{dt}H(\omega(t))dt.
\end{equation}

Of course, choosing $\omega(t)$ as  the solution $y(t)$ to
\eqref{ham} yields
$$
H(y(t_1))-H(y_0)=
%\int_{t_0}^{t_1}\frac{d}{dt}H(y(t))\mathrm{d}t=
\int_{t_0}^{t_1}\nabla^T H(y(t)) \frac{d y(t)}{dt}
\mathrm{d}t=\int_{t_0}^{t_1}\nabla^TH(y(t))J\nabla
 H(y(t))\mathrm{d}t=0.
$$
The above result, which implies the conservation of the Hamiltonian
function along the trajectory $y(t)$ at times $t_0$ and $t_1$, has
been obtained by exploiting \eqref{ham} and the skew-symmetry of the
matrix $J$.

Is it possible to obtain a similar result for a curve $\omega(t)$
not coincident with the unknown solution $y(t)$ but nonetheless
approximating it to a given order?

Surprisingly enough, the answer is positive. Let
$\{\omega_j(t)\}_{j=0}^{\infty}$ be a set of linearly independent
scalar functions defined on $[t_0,t_1]$ and
$\{\gamma_j\}_{j=0}^{r-1}$ a set of unknown vectors. For
simplicity, we assume that
\begin{itemize}
  \item[(1)] the interval $[t_0,t_1]$ coincides with $[0,1]$;
  \item[(2)] the functions $\{\omega_j(t)\}_{j=0}^{\infty}$ are
  orthogonal;\footnote{Some further simplification can be obtained by
  choosing an orthonormal basis \cite{BIT0}.}
  \item[(3)] the integrals of such functions are easily expressible as
  linear combination of themselves.\footnote{This is not a severe restriction,
  since elementary
  trigonometric functions and polynomials do have such a
  property. }
\end{itemize}
By setting $\omega'(t)= \sum_{i=0}^{r-1}\gamma_i \omega_i(t)$, we
obtain
\begin{equation}
\label{omegat}
%\frac{d\omega(t)}{dt}=\sum_{i=0}^r\gamma_i\omega_i(t);\quad
\omega(t)=y_0+\sum_{i=0}^{r-1}\gamma_i\int_0^t\omega_i(\tau)\mathrm{d}\tau\equiv y_0+\sum_{i=0}^{r'}\tilde{\gamma_i}\omega_i(t).
\end{equation}
The vectors $\tilde{\gamma_j}$, $j=0,\dots,r'$ are uniquely
determined in terms of linear combinations of the $\gamma_i$,
$i=0,\dots,r-1$,  according to the specific relations mentioned in
item (3) above.\footnote{For example, for the shifted Legendre
polynomials that we shall consider later, one has: $r'=r$.}
Setting $y_1=\omega(1)$ yields
$$
H(y_1)-H(y_0)=\sum_{i=0}^{r-1}\gamma_i^T\int_0^{ 1}\omega_i(t)\nabla
H(\omega(t))\mathrm{d}t.
$$

We now choose the $\{\gamma_i\}$ as the Fourier coefficients of
the function $J\nabla H(\omega(t))$, i.e., we pose

\begin{equation}\label{four}
\gamma_i=\eta_i\int_0^1\omega_i(t)J\nabla H(\omega(t))dt, \qquad
i=0,\dots,r-1,
\end{equation}
\noindent where $\{\eta_i\}$ are scalars that normalize the
orthogonal functions $\{\omega_i(t)\}$ in order to make them an
orthonormal basis for the space $L_2([0,1])$ of square-integrable
functions. Considering the way the vectors $\{\gamma_i\}$ are
involved in the definition of $\omega'(t)$, we see that
\begin{equation}
\label{etai} \eta_i = \left(  \int_0^1 \omega_i^2(t) \mathrm{d} t
\right)^{-1}, \qquad i\ge0.
\end{equation}

\begin{theorem}
With the choice (\ref{four}), the Hamiltonian assumes the same
values at $y_0$ and at $y_1$.
\end{theorem}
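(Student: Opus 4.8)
The plan is to start from the elementary identity \eqref{obs} applied to the curve $\omega(t)$ of \eqref{omegat} built from coefficients satisfying \eqref{four}, for which $\omega(0)=y_0$ and $\omega(1)=y_1$. Expanding the integrand by the chain rule gives $\frac{\mathrm{d}}{\mathrm{d}t}H(\omega(t))=\nabla^T H(\omega(t))\,\omega'(t)$, and since $\omega'(t)=\sum_{i=0}^{r-1}\gamma_i\omega_i(t)$ this equals $\sum_{i=0}^{r-1}\omega_i(t)\,\nabla^T H(\omega(t))\,\gamma_i$. Integrating this finite sum term by term over $[0,1]$ reproduces exactly the formula displayed just above the theorem, namely
$$ H(y_1)-H(y_0)=\sum_{i=0}^{r-1}\gamma_i^T\rho_i, \qquad \rho_i:=\int_0^1\omega_i(t)\,\nabla H(\omega(t))\,\mathrm{d}t . $$
Up to here only the definition of $\omega$ and the fundamental theorem of calculus have been used.

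Next I would substitute the prescription \eqref{four}. Since $J$ is a constant matrix it may be pulled out of the integral, so \eqref{four} reads $\gamma_i=\eta_i J\rho_i$. Hence $\gamma_i^T\rho_i=\eta_i\,\rho_i^T J^T\rho_i$ for each $i$, and therefore
$$ H(y_1)-H(y_0)=\sum_{i=0}^{r-1}\eta_i\,\rho_i^T J^T\rho_i . $$

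The last step is the observation that every summand vanishes: $\rho_i^T J^T\rho_i$ is a scalar, hence coincides with its transpose $\rho_i^T J\rho_i$, which equals $-\rho_i^T J^T\rho_i$ by the skew-symmetry of $J$; thus $\rho_i^T J^T\rho_i=0$. Consequently $H(y_1)-H(y_0)=0$, which is the claim. I do not expect a genuine obstacle: this is just the discrete-curve counterpart of the continuous computation $\nabla^T H(y)\,J\nabla H(y)=0$ recalled right after \eqref{obs}, and the normalizing scalars $\eta_i$ play no role in the cancellation. The only point worth spelling out is that \eqref{four} is an implicit condition on the $\gamma_i$ (they occur on the right-hand side through $\omega$), so the statement should be read as: \emph{whenever} the $\gamma_i$ satisfy \eqref{four}, energy is conserved — the existence of such $\gamma_i$ is a separate matter, not needed for this proof.
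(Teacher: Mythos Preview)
Your proof is correct and follows exactly the same route as the paper: both substitute \eqref{four} into the displayed expression $H(y_1)-H(y_0)=\sum_{i=0}^{r-1}\gamma_i^T\int_0^1\omega_i(t)\nabla H(\omega(t))\,\mathrm{d}t$ and then invoke the skew-symmetry of $J$ to annihilate each summand $\eta_i\rho_i^TJ^T\rho_i$. The only difference is presentational---you introduce the shorthand $\rho_i$ and spell out the scalar-equals-its-transpose step, while the paper writes the single line $\sum_{i=0}^{r-1}\eta_i\bigl[\int_0^1\omega_i(t)(\nabla H(\omega(t)))^T\mathrm{d}t\bigr]J^T\bigl[\int_0^1\omega_i(t)\nabla H(\omega(t))\,\mathrm{d}t\bigr]=0$ directly.
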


\begin{proof} We have
$$
H(y_1)-H(y_0)=\sum_{i=0}^{r-1}\eta_i\left[\int_0^1\omega_i(t)(\nabla
H(\omega(t)) )^Tdt\right]J^T\left[\int_0^1\omega_i(t)\nabla H(\omega(t))dt\right] = 0,
$$ due to the fact that $J$ is skew-symmetric.\,\QED
\end{proof}

We have then proved that the Hamiltonian can be preserved on
curves different from the solution. A rescaling of the form
$[0,1]\rightarrow [t_0,t_0+h]$ will introduce the stepsize $h$ and
the iteration of the procedure will cover all the integration
interval of interest with the result that on each interval of
length $h$ there are at least two points (the end points) where
the Hamiltonian assumes the same value.

The $r$ relations \eqref{four} define the unknown vectors $\{\gamma_i\}$
implicitly, since they appear as part of the integrands via the
curve $\omega(t)$ in \eqref{omegat}. Thus, in the general case,
\eqref{four} have to be regarded as nonlinear integral equations.
Therefore, in order to obtain a concrete numerical method we need:

\begin{itemize}
  \item[(a)] to substitute the integrals with discrete sums without
  introducing errors in the quadrature step;
  \item[(b)] to design an algorithm to solve the resulting, usually nonlinear, system
  providing the $\{\gamma_i\}$;
  \item[(c)] to check that the vector $y_1=\omega(t_1)$, which will
  be assumed as an approximation, at time $t_1$, of the true solution $y(t_1)$, has
  indeed the desired order of accuracy, say $p$: $||y(t_1)-y_1||=O(h^{p+1})$, for
  a given integer $p\ge 1$.
\end{itemize}
More in general, step (a) could be replaced by the assumption that
primitive functions for the integrands are available in closed
form, no matter whether they can be expressed via standard
quadrature formulae (see the examples  in \cite[Section 4]{BIT2}).
In any event, the requirement (a) can be completely fulfilled if
the functions $\{\omega_i(t)\}$ and $H$ are
polynomials.\footnote{Since polynomials can approximate regular
functions within any degree of accuracy, the present approach
works fine in more general contexts. Furthermore, another
important case where issue (a) is fulfilled is that of
trigonometric functions over one period (see \cite[p.
155]{Gaubook}).}

Let $\nu$ be the degree of $H(y)$, and $r-1$ be the degree of
$\omega'(t)$. Then the integrands in \eqref{four} are
polynomials as well, of degree at most $\mu=r(\nu-1)+r-1=\nu r-1$. We need then
quadrature formulae having degree of precision greater than or equal
to $\mu$. Of course, it will be advantageous to chose them of Gaussian
type.

As we will see in Subsection \ref{nonlinearcase}, the resulting
methods fall in the class of block-BVMs and have been called
Hamiltonian Boundary Value Methods (HBVMs).\footnote{If desired,
these methods also admit a Runge-Kutta formulation (see, for
example, \cite{BIT1,BIT2,BIT3}).}

To describe how the above three tasks can be accomplished, we start
with the simpler case where $H(y)$ is a quadratic function and
therefore problem \eqref{ham} is linear. In particular, we will
consider hereafter the harmonic oscillator problem realizing that
the obtained method is in fact the Gauss-Legendre method. The approach will
be then generalized to nonlinear Hamiltonians thus leading to the new
formulae.

\subsection{Application to the Harmonic Oscillator Problem}\label{lincase}

The harmonic oscillator problem is described in Eq. (\ref{osc});
note that   $\nabla H(y)=(q,p)^T$. Of course, we are tempted to take as
set $\{\omega_i(t)\}_{i=0}^\infty$ the trigonometric orthogonal
systems, since this would provide the exact solution, even for small
values of $r$.

Let us instead take for $\{\omega_i\}$ the set of shifted Legendre
polynomials $\{P_i(t)\}$, which is orthogonal in $[0,1]$. They may be
defined by the Rodrigues formula
$$
P_i(t)=\frac{1}{i!}
\frac{\mathrm{d}^i}{\mathrm{d}t^i}\left[(t^2-t)^i) \right], \qquad i\ge0
$$
(we also set $P_{-1}(t)\equiv 0$). The first few are:
$$
P_0(t)=1,\quad P_1(t)=2t-1,\quad P_2(t)=6t^2-6t+1,\quad P_3(t)=20
t^3 -30 t^2 +12 t -1.
$$
What we here need about such polynomials are the following two
properties ($\delta_{ij}$ denotes the Kronecker symbol):
\begin{itemize}
\item[$(L_1)$] $\int_0^1P_i(t)P_j(t)dt=\frac{1}{2i+1} \delta_{ij}, \qquad i,j\ge0$;
\item[$(L_2)$]
$\int_0^tP_j(x)\mathrm{d}x
=\frac{1}{2(2j+1)}(P_{j+1}(t)-P_{j-1}(t)+\delta_{j0}P_0(t)),\qquad
j\ge0$.
\end{itemize}
 Comparing Property $(L_1)$ with the normalization condition
 \eqref{etai} yields $\eta_i=2i+1$. In this example we  set  $r=3$ and,
 hence,
$$
\begin{array}{rcl}
  \omega'(t) & = & P_0(t)\gamma_0+P_1(t)\gamma_1+P_2(t)\gamma_2,
  \\[.3cm]
  \omega(t) & = &y_0+
  \frac{1}{2}(P_1(t)+P_0(t))\gamma_0+\frac{1}{6}(P_2(t)-P_0(t))\gamma_1+\frac{1}{10}(P_3(t)-P_1(t))\gamma_2.
\end{array}
$$
Relations (\ref{four}) become
$$
\begin{array}{rcl}
\gamma_1&=&\frac{30}{59}J\gamma_0, \\[.3cm]
\gamma_2&=&\frac{1}{6}J\gamma_1~=~-\frac{5}{59}\gamma_0, \\[.3cm]
\gamma_0&=&\frac{1}{6}J(3\gamma_0-\frac{30}{59}J\gamma_0+6y_0) ~=~
\frac{2\cdot 59}{4\cdot 54^2+59^2}(-59I+2\cdot 54J)y_0,
\end{array}
$$
from which, by setting $S=\frac{1}{4\cdot 54^2+59^2}(-59I+2\cdot
54J)$, we obtain
$$
\begin{array}{rcl}
  \omega'(t) & = & \left((2\cdot 59P_0(t)-10P_2(t))I
  +60P_1(t)J\right)Sy_0, \\[.3cm]
  \omega(t)
 &= &y_0+\left[(60P_1(t)+59P_0(t)-P_3(t))I+10(P_2(t)-P_0(t))J\right]Sy_0.
\end{array}
$$
and therefore the residual,
$$R(t)=\omega'(t)-J\omega(t)=P_3(t)JSy_0,$$
 is zero (collocation) when $t$ is a root of
$P_3(t)$. Since the roots of $P_3(t)$ are the abscissae of the Gauss
collocation method of order six, from the uniqueness of the
collocation polynomial we conclude that our approach applied to
linear problems leads to these  formulae.\footnote{As a matter of
fact, it is well-known that Gauss methods conserve  quadratic energy
functions.}

Interestingly, this approach leads to completely new formulae if
applied to general nonlinear Hamiltonian problems.

\subsection{Hamiltonian Boundary Value Methods (HBVMs)} \label{nonlinearcase}
Relations \eqref{four} have been retrieved by imposing the energy
conservation property at the end points of the curve $\omega(t)$,
$t \in [t_0,t_0+h]$, and we are now assuming that $H(y)$ is a
polynomial of degree $\nu$ so that the integrals may be exactly
evaluated, for example, by means of a Gaussian quadrature formula
of sufficiently high degree. As was pointed out in Subsection
\ref{easy} they form a block nonlinear system of dimension $r$
which, once solved, will provide the expression of the curve
$\omega(t)$ and, hence, of the numerical solution at time
$t_1=t_0+h$, namely $y_1=\omega(t_0+h)$. The resulting methods
have been called Hamiltonian Boundary Value Methods (HBVMs)
because they are naturally and conveniently recast as block-BVMs
(see \cite{BIT2, BIT1} for more details about their formulation
and implementation).

What about their order of convergence? Let us  consider again the
shifted Legendre polynomials. Substituting \eqref{four} into
\eqref{omegat}, and setting $c=(t-t_0)/h$, yields
\begin{equation}
\label{omegac} \omega(t_0+ch)=y_0+h \sum_{j=0}^{r-1} (2j+1)
\int_0^c P_j(x) \mathrm{d} x \, \int_0^1 P_j(\tau)
f(\omega(t_0+\tau h)) \mathrm{d} \tau
\end{equation}
and, on differentiating with respect to $c$,
\begin{equation}
\label{omegacprime} \omega'(t_0+ch)=\sum_{j=0}^{r-1} (2j+1) P_j(c)
 \, \int_0^1 P_j(\tau) f(\omega(t_0+\tau h)) \mathrm{d}
\tau.
\end{equation}
In the above relations, we have set $f(y) \equiv J\nabla
H(y)$.\footnote{In fact, the new methods also make sense for non
Hamiltonian problems.}

Both \eqref{omegac} and \eqref{omegacprime} have an interesting
interpretation. Consider the unknown solution $y(t_0+ch)$ of
\begin{equation}
\label{ode} \left\{\begin{array}{l} y'(t_0+ch)=f(y(t_0+ch)), \qquad  c\in [0,1],\\
y(t_0)=y_0,
\end{array} \right.
\end{equation}
or,  equivalently, of its integral formulation
\begin{equation}
\label{odeint} y(t_0+ch)=y_0+h\int_0^c f(y(t_0+\tau h)) \mathrm{d}
\tau.
\end{equation}
Let us consider the Fourier series of $f(y(t_0+\tau h))$ for $\tau\in[0,1]$:
\begin{equation}
\label{fourier} f(y(t_0+c h)) = \sum_{j=0}^\infty (2j+1) P_j(c)
\int_0^1 P_j(\tau) f(y(t_0+\tau h)) \mathrm{d} \tau.
\end{equation}
In terms of such expansion, \eqref{ode} and \eqref{odeint} read
\begin{equation}
\label{odet} \left\{\begin{array}{l} y'(t_0+ch)=\displaystyle
\sum_{j=0}^\infty (2j+1) P_j(c)
\int_0^1 P_j(\tau) f(y(t_0+\tau h)) \mathrm{d} \tau,\qquad  c\in [0,1],\\
y(t_0)=y_0,
\end{array} \right.
\end{equation}
and
\begin{equation} \label{odeintt} y(t_0+ch)=y_0+h
\sum_{j=0}^\infty (2j+1) \int_0^c P_j(x) \mathrm{d} x \, \int_0^1
P_j(\tau) f(y(t_0+\tau h))\mathrm{d} \tau,
\end{equation}
respectively. Consequently \eqref{omegac} and \eqref{omegacprime}
are defined by simply truncating the series on the right hand side
of \eqref{odet} and \eqref{odeintt}. Of course, in the event that
 series \eqref{fourier} actually contains a finite number of terms,
there will be no difference between \eqref{odet}-\eqref{odeintt} and
\eqref{ode}-\eqref{odeint}, provided $r$ is large enough.

How close are the two set of problems? The answer is obtained by
means of the Alekseev formula \cite{Ale}, by using the following
preliminary result.

\begin{lemma}\label{lem1}
Let $g$ be a suitably regular function and $h>0$. Then
$$\int_0^1 P_j(\tau)g(\tau h) \dd \tau = O(h^j), \qquad j\ge0.$$
\end{lemma}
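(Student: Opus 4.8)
The plan is to exploit the orthogonality property $(L_1)$ of the shifted Legendre polynomials together with a Taylor expansion of $g$ about $0$. The key point is that $P_j$ is orthogonal to every polynomial of degree strictly less than $j$; in particular $\int_0^1 P_j(\tau)\,\tau^k\,\dd\tau = 0$ for all $k = 0,1,\dots,j-1$. Since $g$ is suitably regular, I would write its Taylor expansion with integral remainder,
$$g(\tau h) = \sum_{k=0}^{j-1} \frac{g^{(k)}(0)}{k!}(\tau h)^k + \frac{h^j}{(j-1)!}\int_0^1 (1-s)^{j-1} g^{(j)}(s\tau h)\,\dd s\;\tau^j,$$
valid for $\tau\in[0,1]$ provided $g\in C^j$ on a neighbourhood of $0$.

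Substituting this into $\int_0^1 P_j(\tau) g(\tau h)\,\dd\tau$, the first sum contributes nothing: each term is a constant multiple of $h^k \int_0^1 P_j(\tau)\tau^k\,\dd\tau = 0$ because $k < j$. What survives is only the remainder term, which carries an explicit factor $h^j$. Hence
$$\int_0^1 P_j(\tau) g(\tau h)\,\dd\tau = h^j \int_0^1 P_j(\tau)\,\tau^j\,\frac{1}{(j-1)!}\int_0^1 (1-s)^{j-1} g^{(j)}(s\tau h)\,\dd s\;\dd\tau,$$
and the double integral is bounded uniformly for $h$ in a bounded range, since $P_j$ is bounded on $[0,1]$ and $g^{(j)}$ is continuous (hence bounded) on the relevant compact set. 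This gives the $O(h^j)$ estimate. For $j=0$ the statement is trivial ($O(1)$), so one only needs $g$ of class $C^j$ near $0$, which is what "suitably regular" should be taken to mean.

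There is really no serious obstacle here — the whole argument hinges on the single observation that $P_j \perp \Pi_{j-1}$ in $L^2([0,1])$, which is immediate from $(L_1)$ (or from the Rodrigues formula via repeated integration by parts). The only mild care needed is bookkeeping: choosing the Taylor expansion to the correct order $j-1$ so that the leading non-vanishing contribution is exactly of order $h^j$, and noting that the constant hidden in the $O$ depends on $\sup|g^{(j)}|$ over a compact interval containing $[0,h]$. If one prefers to avoid the integral-remainder form, an equivalent route is to note $g(\tau h) = p(\tau h) + r(\tau h)$ with $p$ the degree-$(j-1)$ Taylor polynomial and $r(\tau h) = O((\tau h)^j)$ uniformly, kill the $p$-part by orthogonality, and bound the $r$-part directly by $\|P_j\|_\infty \cdot \sup_{[0,1]}|r(\tau h)| = O(h^j)$. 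Either way the proof is a few lines.
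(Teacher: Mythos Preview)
Your proof is correct and follows essentially the same approach as the paper: expand $g(\tau h)$ in a Taylor series about $0$ and kill the terms of degree less than $j$ by the orthogonality of $P_j$ to $\Pi_{j-1}$. The only difference is that the paper, ``for sake of simplicity,'' assumes the full Taylor series converges and argues termwise, whereas you use the finite Taylor expansion with integral remainder, which is slightly more careful in that it only requires $g\in C^j$ rather than analyticity; both routes rest on the same key observation and arrive at the estimate in the same way.
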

\begin{proof}
Assume, for sake of simplicity, $$g(\tau h) = \sum_{n=0}^\infty
\frac{g^{(n)}(0)}{n!} (\tau h)^n$$ to be the Taylor expansion of
$g$. Then, for all $j\ge0$, $$\int_0^1 P_j(\tau) g(\tau h) \dd\tau
= \sum_{n=0}^\infty \frac{g^{(n)}(0)}{n!} h^n \int_0^1 P_j(\tau)
\tau^n\dd\tau = O(h^j),$$ since $P_j$ is orthogonal to polynomials
of degree $n<j$.\QED\end{proof}

Let us now define the functions
\begin{equation}
\label{mainpart} F_h(c,y) = \sum_{j=0}^{r-1} (2j+1) P_j(c) \int_0^1
P_j(\tau) f(y(t_0+\tau h)) \mathrm{d} \tau
\end{equation}
and
\begin{equation}
\label{residual}
R_h(c,y) = \sum_{j=r}^\infty (2j+1) P_j(c)
\int_0^1 P_j(\tau) f(y(t_0+\tau h)) \mathrm{d} \tau.
\end{equation}
 From Lemma \ref{lem1}, after setting $g(\tau h)=f(y(t_0+\tau h))$,
we deduce that
\begin{equation}
\label{residual1} R_h(c,y) = \sum_{j=r}^\infty a_j(h) P_j(c),
\end{equation}
with $a_j(h)=O(h^j)$, $j=r,r+1,\dots$ and, therefore,
$R_h(c,y)=O(h^r).$

Moreover, for any given $\tilde t\in[t_0,t_0+h]$, we denote by $\omega(s,\tilde t,\tilde y)$
the solution of \eqref{omegacprime} at time $s$ and with initial condition
$\omega(\tilde t)=\tilde y$.

\begin{lemma}[Alekseev (1961)] \label{lem2}
Consider the two initial value problems with the same initial
condition
$$\begin{array}{ll}
z'=\varphi(t,z), & z(t_0)=y_0, \\
y'=\varphi(t,y) + \psi(t,y), & y(t_0)=y_0, \\
\end{array}
$$
and suppose that $\varphi$ is continuously differentiable with
respect to the second argument. Then the two solutions $z(t)$ and
$y(t)$ satisfy the following relation:
\begin{equation}
\label{Alek} y(t)-z(t) = \int_{t_0}^t \frac{\partial z}{\partial
y_0} (t,c,y(c)) \, \psi(c,y(c)) \dd c.
\end{equation}
\begin{proof}
See, for example, \cite[Theorem\,14.5, p.\,96]{HLW} or \cite[Theorem\,7.5.1, p.\,205]{LakTri}.\QED
\end{proof}

\end{lemma}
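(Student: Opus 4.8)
The plan is to prove the Alekseev (Gr\"obner) formula by a telescoping argument along the perturbed trajectory, using the flow of the unperturbed equation $z'=\varphi(t,z)$ as a moving frame of reference. First I would fix notation: for $s\le t$ let $z(t,s,\eta)$ denote the value at time $t$ of the solution of $z'=\varphi(t,z)$ that passes through $\eta$ at time $s$, so that the function called $z(t)$ in the statement is $z(t,t_0,y_0)$ and the symbol $\frac{\partial z}{\partial y_0}(t,s,\eta)$ stands for the Jacobian $\partial z/\partial\eta$ evaluated at $(t,s,\eta)$. The standing hypothesis that $\varphi$ is continuously differentiable in its second argument is exactly what the classical theorem on differentiable dependence on initial data needs: it guarantees that $(s,\eta)\mapsto z(t,s,\eta)$ is $C^1$, that $V(t):=\frac{\partial z}{\partial\eta}(t,s,\eta)$ solves the variational equation $V'=\frac{\partial\varphi}{\partial z}(t,z(t,s,\eta))\,V$ with $V(s)=I$, and that $\frac{\partial z}{\partial s}(t,s,\eta)$ solves the same linear equation in $t$ with initial value $-\varphi(s,\eta)$ at $t=s$.

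The one algebraic fact I need is the identity
$$\frac{\partial z}{\partial s}(t,s,\eta)=-\frac{\partial z}{\partial\eta}(t,s,\eta)\,\varphi(s,\eta).$$
There are two equivalent ways to get it. The cleanest is by uniqueness for linear initial value problems: by the previous paragraph both sides, viewed as functions of $t$, solve the homogeneous linear system $W'=\frac{\partial\varphi}{\partial z}(t,z)\,W$ and they coincide at $t=s$, where the right-hand side equals $-I\cdot\varphi(s,\eta)=-\varphi(s,\eta)$. Alternatively, one differentiates the group relation $z(t,\sigma,z(\sigma,s,\eta))\equiv z(t,s,\eta)$ with respect to $\sigma$ and uses $\frac{\partial}{\partial\sigma}z(\sigma,s,\eta)=\varphi(\sigma,z(\sigma,s,\eta))$.

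Now I would introduce the auxiliary curve $\Phi(c):=z(t,c,y(c))$ for $c\in[t_0,t]$, where $y$ is the perturbed solution. Its endpoint values are $\Phi(t_0)=z(t,t_0,y_0)=z(t)$ and $\Phi(t)=z(t,t,y(t))=y(t)$, so $y(t)-z(t)=\int_{t_0}^t\Phi'(c)\dd c$. Differentiating $\Phi$ by the chain rule gives $\Phi'(c)=\frac{\partial z}{\partial s}(t,c,y(c))+\frac{\partial z}{\partial\eta}(t,c,y(c))\,y'(c)$; substituting $y'(c)=\varphi(c,y(c))+\psi(c,y(c))$ together with the identity above, the two $\varphi$-terms cancel and $\Phi'(c)=\frac{\partial z}{\partial\eta}(t,c,y(c))\,\psi(c,y(c))$, which is precisely the integrand of \eqref{Alek}. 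Integrating over $[t_0,t]$ yields the claim.

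The main obstacle is not the algebra, which is short, but the smoothness bookkeeping: one must invoke the (nontrivial) theorem that the flow is $C^1$ in $(s,\eta)$, so that $\Phi$ is differentiable and the chain rule legitimately applies, and one must check that $y(c)$ stays in the region where $z(t,c,\cdot)$ is defined. Since the paper is content to cite \cite[Theorem\,14.5, p.\,96]{HLW} or \cite[Theorem\,7.5.1, p.\,205]{LakTri}, this background is taken for granted; the argument sketched here simply makes explicit how Alekseev's formula follows from it.
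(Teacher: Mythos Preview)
Your argument is correct and is in fact the standard proof of the Gr\"obner--Alekseev formula: introduce the flow $z(t,s,\eta)$ of the unperturbed equation, use differentiable dependence on initial data to justify the identity $\partial z/\partial s=-(\partial z/\partial\eta)\,\varphi(s,\eta)$, and then differentiate the auxiliary curve $\Phi(c)=z(t,c,y(c))$ to obtain the desired integrand after the $\varphi$-terms cancel.

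The only ``difference'' from the paper is that the paper does not prove the lemma at all: it merely cites \cite[Theorem\,14.5, p.\,96]{HLW} and \cite[Theorem\,7.5.1, p.\,205]{LakTri}. What you have written is essentially the proof one finds in those references, so you have supplied what the paper chose to outsource. Your closing remark about the smoothness bookkeeping (the $C^1$ dependence of the flow on $(s,\eta)$ and the domain issue for $y(c)$) is apt and correctly identifies where the real analytic content lies.
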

We can now state the following result.

\begin{theorem}\label{fourierord}
Let $\omega(t_0+ch)$ and $y(t_0+ch)$, $c\in[0,1]$, be the
solutions of (\ref{omegacprime}) and (\ref{odet}), respectively.
Then,
$$y(t_0+h)-\omega(t_0+h) = O(h^{2r+1}).$$\end{theorem}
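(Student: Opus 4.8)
The plan is to apply the Alekseev nonlinear variation-of-constants formula (Lemma~\ref{lem2}) with $z(t)=\omega(t_0+ch)$ the solution of the truncated problem \eqref{omegacprime} and $y(t)=y(t_0+ch)$ the solution of the full problem \eqref{odet}. Writing \eqref{odet} as $y'=F_h(c,y)+R_h(c,y)$ and \eqref{omegacprime} as $z'=F_h(c,z)$, I would take $\varphi=F_h$ and the perturbation $\psi=R_h$. Alekseev then gives
$$
y(t_0+h)-\omega(t_0+h) = \int_0^1 \frac{\partial \omega}{\partial y_0}(t_0+h,t_0+ch,y(t_0+ch))\, R_h(c,y(t_0+ch))\, h\,\dd c,
$$
so the whole estimate is reduced to understanding the size of the integral on the right.

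The key point is that we already know from Lemma~\ref{lem1} (applied with $g(\tau h)=f(y(t_0+\tau h))$) that the residual has the expansion \eqref{residual1}, $R_h(c,y)=\sum_{j\ge r} a_j(h)P_j(c)$ with $a_j(h)=O(h^j)$. So $R_h$ is not merely $O(h^r)$ pointwise in $c$: it is, up to higher-order terms, a single multiple $a_r(h)P_r(c)=O(h^r)P_r(c)$ of the $r$-th shifted Legendre polynomial. The plan is to insert this expansion into the Alekseev integral and then exploit orthogonality a second time. Expanding the smooth kernel $K(c):=\frac{\partial\omega}{\partial y_0}(t_0+h,t_0+ch,y(t_0+ch))$ in the Legendre basis on $[0,1]$, the leading term of $\int_0^1 K(c)\,R_h(c,\cdot)\,\dd c$ is $a_r(h)\int_0^1 K(c)P_r(c)\,\dd c$; since $K$ is smooth in $c$ (being built from the flow of a smooth ODE and from $y(\cdot)$), the same argument as in Lemma~\ref{lem1} shows $\int_0^1 K(c)P_r(c)\,\dd c=O(h^r)$. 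Combined with the extra factor $h$ outside the integral, this yields $y(t_0+h)-\omega(t_0+h)=O(h)\cdot O(h^r)\cdot O(h^r)=O(h^{2r+1})$, which is exactly the claim. One must also check that the contributions of the tail terms $a_j(h)P_j(c)$ with $j>r$ are no larger: each contributes $O(h^j)\cdot O(h^{j})\cdot h = O(h^{2j+1})$ with $2j+1\ge 2r+3$, hence negligible.

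The main obstacle — the step to be careful about — is justifying that the Alekseev kernel $K(c)$ is smooth (or at least $C^r$) in $c$ uniformly for small $h$, so that the "$\int_0^1 K(c)P_r(c)\,\dd c=O(h^r)$'' move is legitimate. Here $\partial\omega/\partial y_0$ is the variational flow of the truncated vector field $F_h$, which depends on $h$ both through its arguments and through its coefficients $(2j+1)P_j(c)$; one needs that $F_h$ and its derivatives are bounded uniformly in $h\le h_0$ on the relevant compact set, which follows because $F_h$ is a finite truncation of the convergent Fourier expansion \eqref{fourier} of the smooth function $f$. Given that, standard theory of dependence of ODE solutions on parameters and initial data gives the required regularity of $K$ in $c$, and the dependence on $\tilde t=t_0+ch$ is likewise smooth. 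A minor additional point is that the argument of $R_h$ in the Alekseev formula is $y(t_0+ch)$, the \emph{exact} solution, so $g(\tau h)=f(y(t_0+\tau h))$ is genuinely smooth and Lemma~\ref{lem1} applies verbatim; no self-referential difficulty with $\omega$ arises inside the residual. With these regularity facts in hand the proof is just the two orthogonality reductions plus the bookkeeping of powers of $h$ described above.
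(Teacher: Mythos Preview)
Your proposal is correct and follows essentially the same route as the paper: apply the Alekseev formula with $\varphi=F_h$, $\psi=R_h$, insert the expansion \eqref{residual1} of $R_h$, and then invoke Lemma~\ref{lem1} a second time on the kernel $\partial\omega/\partial y_0$ to extract the extra factor $O(h^r)$, yielding $h\cdot O(h^r)\cdot O(h^r)=O(h^{2r+1})$. Your discussion of the regularity of the Alekseev kernel $K(c)$ is in fact more careful than the paper's own argument, which applies Lemma~\ref{lem1} to $K$ without comment; otherwise the two proofs coincide step for step.
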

\begin{proof}
In terms of the functions $F_h$ and $R_h$, problems
(\ref{omegacprime}) and (\ref{odet}) read
$$\omega'(t_0+ch) = F_h(c,\omega), \qquad y'(t_0+ch) = F_h(c,y) +R_h(c,y),$$
respectively. From the Alekseev formula \eqref{Alek} one then
obtains, by virtue of Lemma~\ref{lem1} and \eqref{residual1}, that
\begin{eqnarray*}\lefteqn{y(t_0+h)-\omega(t_0+h) =}\\ &=&
h\int_0^1 \frac{\partial \omega}{\partial y_0}(t_0+h,t_0+\tau h,y(t_0+\tau
h))
\, R_h(\tau,y)\dd\tau \\
&=& h \sum_{j=r}^\infty  \left( \int_0^1 P_j(\tau) \frac{\partial
\omega}{\partial y_0}(t_0+h,t_0+\tau h,y(t_0+\tau h))\dd \tau \right)
a_j(h)
\\&=& h\,O(h^{r})\,O(h^{r}) = O(h^{2r+1}).\QED
\end{eqnarray*}

As a direct consequence, we obtain the following result.

\begin{corol} Let $T=Nh$ be a fixed positive real number, $N$ being an integer.
Then, the approximation to the solution of problem
$$y'(t) = f(y(t)), \qquad t\in[t_0,t_0+T], \qquad y(t_0) = y_0,$$
by means of
$$\omega'(t_{i-1}+ch) = \sum_{j=0}^{r-1} (2j+1) P_j(c) \int_0^1 P_j(\tau)
f(\omega(t_{i-1}+ch))\dd\tau, \quad c\in[0,1], \quad
i=1,\dots,N,$$ where ~$t_i = t_{i-1}+h$, ~$i=1,\dots,N$,~ and
~$\omega(t_0)=y_0$,~ is ~$O(h^{2r})$~ accurate.
\end{corol}
\end{proof}

\subsection{A numerical example}
In order to make clear the advantage of using the
energy-preserving HBVMs (\ref{omegacprime}) over standard
symplectic methods, we just mention that standard mesh selection
strategies are not advisable for symplectic methods (see, e.g.,
\cite[p.\,127]{SC}, \cite[p.\,235]{LeRe}, \cite[p.\,303]{HLW}),
since a {\em drift} in the Hamiltonian and a quadratic error
growth is experienced in such a case. The example that we consider
below gives a hint that this is not the case for HBVMs, provided
that the integral in (\ref{omegacprime}) is exactly computed (at
least, numerically which, as observed before, can be always
achieved, for all suitably regular Hamiltonian functions).

\begin{remark}\label{gaussrem} As also observed in Section~\ref{lincase},
we mention that when the integrals in (\ref{omegacprime}) are approximated
by means of the Gauss-Legendre formula with $r$ points, then the Gauss-Legendre
method of order $2r$ is obtained \cite{BIT2}.\end{remark}

\begin{figure}
\centerline{\includegraphics[width=12cm,height=9cm]{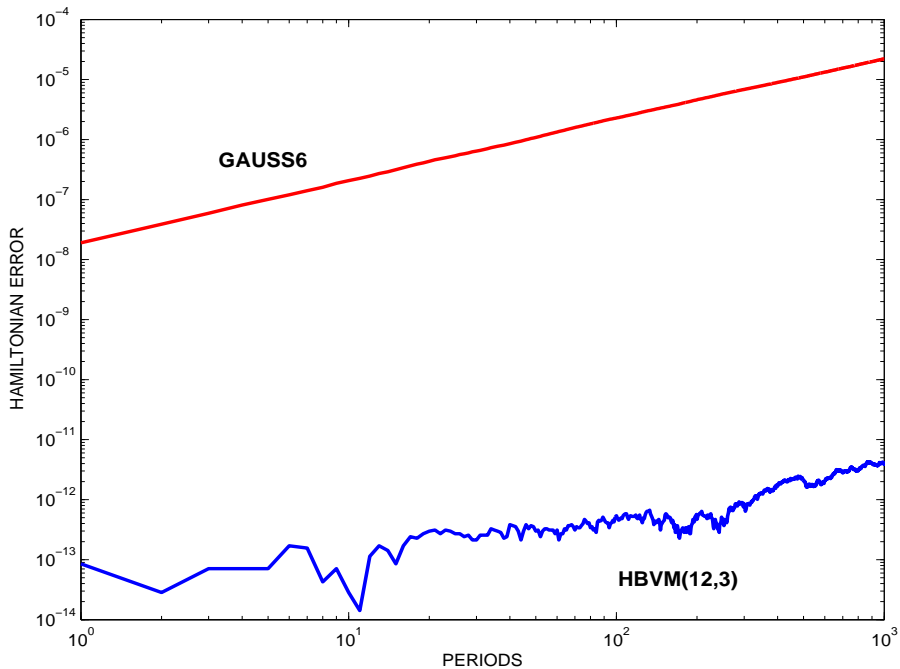}}
\caption{Hamiltonian error growth over 1000 periods with a
variable-step solution of problem (\ref{kepler})--(\ref{kepler0}),
$e=0.99$, with tolerance $tol=10^{-10}$.}
\label{fighbvmh}\end{figure}
\begin{figure}
\centerline{\includegraphics[width=12cm,height=9cm]{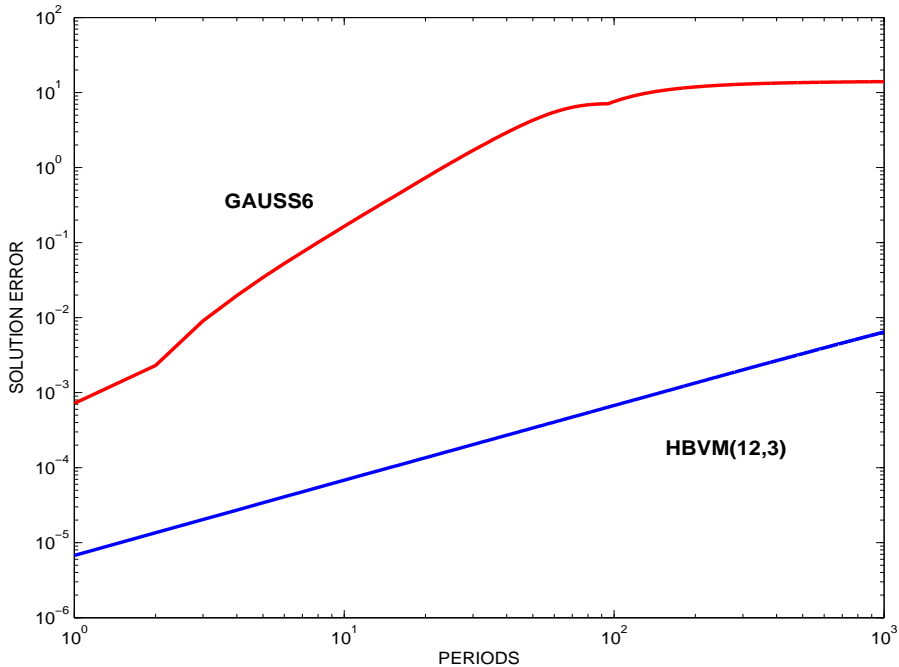}}
\caption{Error growth over 1000 periods with a variable-step
solution of problem (\ref{kepler})--(\ref{kepler0}), $e=0.99$,
with tolerance $tol=10^{-10}$.} \label{fighbvm}\end{figure}

We now consider the Kepler problem (see, e.g., \cite[p.\,9]{HLW}), with Hamiltonian
\begin{equation}\label{kepler}H([q_1,\, q_2,\, p_1,\, p_2]^T) = \frac{1}2(
p_1^2+p_2^2)-\frac{1}{\sqrt{q_1^2+q_2^2}},\end{equation} that,
when started at
\begin{equation}\label{kepler0}\pmatrix{cccc}1-e,&0, &0, &\sqrt{\frac{1+e}{1-e}}\endpmatrix^T,\end{equation}
has an elliptic periodic orbit of period $2\pi$ and eccentricity
$e\in[0,1)$. Moreover, in such a case, the (constant) value of the
Hamiltonian is $H_0=-\frac{1}2$. When $e$ is close to 0, the
problem is efficiently solved by using a constant stepsize.
However, it becomes more and more difficult as $e\rightarrow1$, so
that a variable-step integration would be more appropriate in this
case. In Figures~\ref{fighbvmh} and \ref{fighbvm}  we plot the
error growth in the Hamiltonian and in the solution, respectively,
over 1000 periods, in the case $e=0.99$, when using a {\em
standard} mesh selection strategy (e.g., like (1.1) in
\cite[p.\,303]{HLW}) with the (symplectic) Gauss-Legendre method
of order 6 and the HBVM (\ref{omegacprime}) with $r=3$ (then,
again of order 6), where the integral is approximated by means of
a Gauss formula with $k=12$ points, then having order 24, which is
sufficient to obtain, in this case, a practical energy
conservation (see \cite{BIT0}, and references therein, for full
details). The latter method is in general denoted by HBVM$(k,r)$
\cite{BIT0}, so that in the present case we consider the
HBVM(12,3) method.\footnote{According to what observed in
Remark~\ref{gaussrem}, the HBVM(3,3) method coincides with the
Gauss-Legendre method of order 6.} The tolerance used is
$tol=10^{-10}$. As one can see, the Gauss-Legendre formula
produces a {\em drift} in the Hamiltonian and a quadratic error
growth, whereas the HBVM(12,3) exhibits a negligible error in the
Hamiltonian and a linear error growth. This confirms that the
symplectic Gauss-Legendre method is more conveniently used with a
constant stepsize, whereas the energy preserving HBVM can be
profitably used with a {\em standard} mesh selection strategy.

\begin{table}
\caption{Statistics for the variable step implementation of
HBVM(12,3) on the Kepler problem (\ref{kepler})--(\ref{kepler0}),
$e=0.99$, with tolerance $tol=10^{-10}$.}\medskip \centerline{
\begin{tabular}{|r|r|r|}
\hline
  periods  &   error & points \\
\hline
   100  &6.75e-04     &   15300\\
   200  &1.36e-03     &   30600\\
   300  &2.04e-03     &   45900\\
   400  &2.72e-03     &   61200\\
   500  &3.41e-03     &   76500\\
   600  &4.10e-03     &   91800\\
   700  &4.79e-03     &  107100\\
   800  &5.48e-03     &  122400\\
   900  &6.17e-03     &  137700\\
  1000  &6.85e-03     &  153000\\
\hline
\end{tabular}}
\label{tabhbvm}
\end{table}

In Table~\ref{tabhbvm} we also list the number of points required
by the HBVM(12,3) method, with variable stepsize,
for covering an increasing number of periods: as one can easily
deduce from the listed data, 153 steps are required to cover each period.
In order to make clear the improvement over the symplectic sixth-order
Gauss method, it is enough to observe that,
in order to obtain a comparable accuracy, this method would require
approximately $2\cdot 10^5$ (constant) steps for each period!

\section{Conclusions}
We have reported three problems in Numerical Analysis considered
difficult for as long as half a century and which, in our opinion,
have been eventually resolved by dramatically changing  the
traditional approach suggested by experience. We reiterate that
this is certainly due to the fact that past studies were heavily
biased by the concept of continuity, whose peculiarities the
researchers have tried to import in the new context of Numerical
Analysis, where such problems are, instead, of discrete nature:
the two areas, i.e., the continuous one and the discrete one, are
often not overlapping.

Within this scenario, although it's certainly easier to follow the
tracks already drawn and consolidated in the literature, exploring
new routes sometimes allows to reach the goal more quickly and
innovatively, as picturesquely described in the tale of the {\em egg
of Columbus}.

\end{document}